\newtheorem{thm}{Theorem}
\newtheorem{prop}[thm]{Proposition}
\newtheorem{lem}[thm]{Lemma}
\theoremstyle{definition}
\newtheorem{ex}[thm]{Example}
\newtheorem{rem}[thm]{Remark}
\newtheorem{question}[thm]{Question}
\newcommand{\Gal}{\operatorname{Gal}}
\newcommand{\Trop}{\operatorname{Trop}}
\newcommand{\val}{\operatorname{val}}
\newcommand{\CC}{\mathbb C}
\newcommand{\GG}{\mathbb G}
\newcommand{\RR}{\mathbb R}
\newcommand{\ZZ}{\mathbb Z}
\begin{document}

\title{Connectivity of tropicalizations}
\author{Dustin Cartwright \and Sam Payne}

\dedicatory{Dedicated to Bernd Sturmfels on the~occasion of
his~fiftieth~birthday}

\begin{abstract}
We show that the tropicalization of an irreducible variety over a complete or algebraically
closed valued field is connected through codimension~1, giving an affirmative
answer in all characteristics to a question posed by Einsiedler, Lind, and Thomas in 2003.
\end{abstract}
\maketitle

\vspace{-20pt}

\section{Introduction}

Let $X$ be a closed subvariety of the torus $\GG_m^n$ over a valued field $K$. The tropicalization $\Trop(X)$ is the image in $\RR^n$ of the nonarchimedean analytification of~$X$ under the coordinate-wise valuation map. It is the support of a polyhedral complex of pure dimension equal to the algebraic dimension of $X$.  

We say that a pure-dimensional polyhedral complex is \emph{connected through codimension 1} if, for any pair of facets $F$ and~$F'$, there exists a sequence of facets $F = F_0, \ldots, F_s = F'$ such that $F_i \cap F_{i-1}$ is a face of codimension~1, for $1 \leq i \leq s$. This property is independent of the choice of polyhedral structure because a connected pure-dimensional polyhedral complex is connected through codimension 1 if and only if it cannot be disconnected by removing a finite union of polyhedra of codimension at least 2.  Such complexes are also sometimes called strongly connected \cite[11.6]{Bjorner95}.

\begin{thm} \label{thm:main}
Suppose $X$ is irreducible and $K$ is either algebraically closed, complete, or real closed with convex valuation ring.  Then $\Trop(X)$ is connected through codimension 1.
\end{thm}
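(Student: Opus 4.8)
The plan is to reduce to an algebraically closed base field and then argue by contradiction, cutting $X$ down to an irreducible curve by a general linear section. For the reduction, note that base change along an extension $K \hookrightarrow L$ of valued fields does not change $\Trop(X) \subseteq \RR^n$. When $K$ is complete one takes $L = \widehat{\overline K}$: it is algebraically closed with value group dense in $\RR$, the irreducible components of $(X_L)_{\mathrm{red}}$ are permuted transitively by $\Gal(\overline K/K)$, and this group acts trivially on $\RR^n$ because the extended valuation is unique, so all those components have the same tropicalization, equal to $\Trop(X)$. A similar base change handles the algebraically closed and real closed cases. So I may assume $K$ algebraically closed with value group dense in $\RR$ and $X$ irreducible. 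If $\dim X \le 1$ there is nothing to prove, since ``connected through codimension $1$'' then just means ``connected'' and $\Trop(X)$ is the image of the connected space $X^{\an}$ under the continuous valuation map. So assume $d := \dim X \ge 2$.

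Suppose for contradiction that $\Sigma := \Trop(X)$, equipped with a polyhedral structure and its multiplicities, is not connected through codimension $1$. Partition its facets into codimension-$1$-connected classes and write $\Sigma = \Sigma_1 \cup \Sigma_2$, where $\Sigma_1$ is the union of the facets in one class together with their faces and $\Sigma_2$ the union of the rest together with their faces; both are nonempty and pure of dimension $d$, and $\Delta := \Sigma_1 \cap \Sigma_2$ has dimension $\le d-2$. A codimension-$1$ face $\tau$ of $\Sigma_i$ cannot lie in $\Delta$, since $\dim\tau = d-1 > \dim\Delta$; hence every facet of $\Sigma$ through $\tau$ lies in $\Sigma_i$, and the balancing condition of $\Sigma$ at $\tau$ coincides with that of $\Sigma_i$ at $\tau$. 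Therefore $\Sigma_1$ and $\Sigma_2$ are each balanced, $[\Sigma] = [\Sigma_1] + [\Sigma_2]$ as effective tropical cycles, and $[\Sigma_1], [\Sigma_2]$ are both nonzero of dimension $d$.

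Put $\ell = n-d+1$ and pick a general linear subspace $V$ of $\GG_m^n$ of dimension $\ell$, so that $\Trop(V)$ is the standard $\ell$-dimensional tropical linear space $L$, $\deg V = 1$, and, by Bertini irreducibility (valid in all characteristics over the infinite field $K$), the set of translates $x$ with $xV \cap X$ not an irreducible curve of dimension $1$ is a proper Zariski-closed subset not containing $V$. Choose $w$ in the value group, general enough that $(L+w) \cap \Delta = \emptyset$ --- possible since $\dim(\Delta - L) \le (d-2)+\ell < n$ and the value group is dense --- and then $t \in \GG_m^n(K)$ with $\val(t) = w$ and $tV$ in the Bertini-good locus; this is compatible because $\{t : \val(t)=w\}$ is Zariski dense in $\GG_m^n$ (the valuation units being Zariski dense in $\GG_m$). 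Then $C := tV \cap X$ is an irreducible curve, so $C^{\an}$ is connected and $\Trop(C)$ is connected; moreover $\Trop(C) \subseteq \Trop(tV) \cap \Trop(X) = (L+w)\cap|\Sigma| = \big((L+w)\cap|\Sigma_1|\big) \sqcup \big((L+w)\cap|\Sigma_2|\big)$, a disjoint union because $(L+w)\cap\Delta = \emptyset$. Connectedness forces $\Trop(C)$ into one piece, say $\Trop(C) \subseteq |\Sigma_1|$. On the other hand $tV$ and $X$ meet properly, so compatibility of tropicalization with proper intersection of general translates gives $\Trop(C) = \Trop(tV)\cdot_{\mathrm{st}}\Trop(X) = (L+w)\cdot_{\mathrm{st}}[\Sigma_1] + (L+w)\cdot_{\mathrm{st}}[\Sigma_2]$, where $\cdot_{\mathrm{st}}$ is the stable tropical intersection; the second summand is effective and supported on $(L+w)\cap|\Sigma_2|$, hence disjoint from $\Trop(C)$, so it vanishes and $\Trop(C) = (L+w)\cdot_{\mathrm{st}}[\Sigma_1]$. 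Taking degrees --- using that a general translate of $L$ computes the degree of an effective balanced $d$-cycle, that a general linear section preserves degree, and that tropicalization preserves the degree of a curve --- we get $\deg X = \deg C = \deg\Trop(C) = \deg[\Sigma_1]$, whence $\deg[\Sigma_2] = \deg\Sigma - \deg[\Sigma_1] = 0$. This contradicts the positivity of the degree of the nonzero effective balanced cycle $[\Sigma_2]$ (equivalently, the surjectivity of a general coordinate projection $|\Sigma_2| \to \RR^d$).

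I expect two points to require the most care. The first is the reduction to an algebraically closed base field: it is precisely in arranging that the components of $X_L$ share a single tropicalization that the three hypotheses on $K$ are used, and the subtleties when $K$ is not complete or the valuation has higher rank must be dealt with there. The second, and the one genuinely external input, is the compatibility of tropicalization with intersection of general translates; granting this, the remaining ingredients --- Bertini irreducibility, connectedness of the analytification of an irreducible scheme, invariance of degree under general linear section and under tropicalization, and positivity of degrees of effective balanced cycles --- are standard. A subsidiary technical nuisance is verifying that all the genericity demands on $V$, $w$, and $t$ can be satisfied simultaneously over $K$.
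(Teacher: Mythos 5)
Your route is genuinely different from the paper's. The paper argues by induction on dimension: after reducing to an algebraically closed, complete, nontrivially valued field, it chooses coordinates (its Lemma~\ref{lem:coords}) so that a translate $\Delta - v$ of the generic tropical hyperplane meets every face of $\Trop(X)$ properly, picks a hyperplane $H$ with $\Trop(H)=\Delta-v$ and $X\cap H$ irreducible via Jouanolou's Bertini, and transfers connectivity through codimension~1 from $\Trop(X\cap H)=\Trop(X)\cap\Trop(H)$ (proper-intersection lifting) back to $\Trop(X)$. You instead argue by contradiction: split $\Trop(X)$ into two balanced effective pieces $\Sigma_1,\Sigma_2$ meeting in codimension $\geq 2$, cut $X$ by a torus translate of a linear space of complementary-plus-one dimension to get an irreducible curve $C$, use connectedness of $\Trop(C)$ to force the stable intersection onto one piece, and contradict positivity of tropical degree. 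This buys a statement about arbitrary effective balanced decompositions, but at the price of much heavier inputs: lifting of \emph{stable} intersections with multiplicities for generic translates with fixed valuation (stronger than the proper-intersection lifting the paper uses), translation-invariance/associativity of stable intersection to identify degrees, and positivity of the degree of a nonzero effective balanced cycle. These are true and available, but you assert rather than prove them; your observation that $\Sigma_1,\Sigma_2$ are individually balanced is correct, since a codimension-1 face shared by facets in different classes would make those facets adjacent.

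Two points need real arguments, not just citations. First, your Bertini step: you claim that for a general linear $V$ the set of $t$ with $tV\cap X$ not an irreducible curve lies in a proper Zariski-closed subset of $\GG_m^n$. Bertini (Jouanolou) gives irreducibility for the generic member of the \emph{full} family of linear sections, not for the $n$-dimensional subfamily of torus translates of a fixed $V$; this is exactly the pattern of the error in \cite{BJSST} flagged in Remark~\ref{rem:error}, where the analogous claim for translates of a subtorus is false. Here the claim is salvageable because $(t,V)\mapsto tV$ is surjective onto the Grassmannian, so the locus of bad pairs is a non-dense constructible set (EGA~IV~9.7.7) and for $V$ outside a proper closed set its fiber of bad $t$'s is proper; but you must say this, and also check it is compatible with requiring $\Trop(V)=L$ with unit multiplicities and $\val(t)=w$ (density of $\val^{-1}(w)$ handles the last). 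Second, your reduction ends with ``$K$ algebraically closed with value group dense in $\RR$,'' but you later use connectedness of $C^{\an}$, which needs $K$ complete (and the density needs the valuation nontrivial); as in the paper, you must pass to a complete algebraically closed nontrivially valued extension and note that irreducibility and $\Trop$ are preserved. With these repairs (and proofs or precise references for the stable-intersection lifting and the degree positivity), your argument goes through, though it is considerably less elementary than the paper's induction.
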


\noindent The theorem applies, in particular, to irreducible varieties over any field with the trivial valuation, since the trivial valuation is always complete.  

Theorem~\ref{thm:main} says that one can traverse all facets of the tropicalization of an irreducible variety over a suitable field by stepping from facet to facet across codimension 1 faces.  This property is crucial for computational purposes, because such stepping procedures are at the heart of standard algorithms for computing tropicalizations, such as those implemented in the softward package Gfan~\cite{gfan}.  

Connectedness through codimension 1 for tropicalizations of irreducible varieties over algebraically closed nonarchimedean fields was posed as a question by Einsiedler, Lind, and Thomas during a problem session at the 2003 AIM workshop on ``Amoebas and tropical geometry'' \cite[Problem~A.6]{AIM-Tropical}.  A solution is presented over the complex numbers with respect to the trivial valuation in \cite{BJSST}, but the argument is flawed, as described in Remark~\ref{rem:error}.  Theorem~\ref{thm:main} gives an affirmative answer to the original question posed at AIM.

\bigskip

Although the question of connectedness through codimension 1 was originally stated over algebraically closed fields, we include the cases of complete fields and real closed fields with convex valuation ring because there has been significant recent interest in extending the basic results of tropical geometry to non-closed fields, especially through connections to nonarchimedean analysis  \cite{Gubler12}, and these cases are both natural and readily reduced to the algebraically closed case (Proposition~\ref{prop:component}).  Completeness is the usual hypothesis for applying analytic techniques, while convexity of the valuation ring in a real closed field is equivalent to several other natural compatibility conditions connecting the order relation to the valuation \cite[Proposition~2.2.4]{EnglerPrestel05} and is used for model theory and elimination of quantifiers with positivity \cite{Prestel07}.

If $K$ does not satisfy any of the hypotheses of Theorem~\ref{thm:main}, then the tropicalization of an irreducible variety over $K$ may be disconnected.  Moreover, even when such a tropicalization is connected, it may fail to be connected through codimension 1.  The tropicalization does not change when one passes to a field extension, but an irreducible variety may become reducible after passing to the completion or algebraic closure, in which case Theorem~\ref{thm:main} does not apply.  The following two examples give irreducible varieties whose tropicalizations are disconnected (Example~\ref{ex:disconnected}) or connected, but not through codimension~1 (Example~\ref{ex:connected-codim-2}).

\begin{ex} \label{ex:disconnected}
Let $p$ be a prime number.  The polynomial $f = x^2 + x + p$ is irreducible over the rational numbers, so its zero locus $X$ is an irreducible subvariety of $\GG_m$.  However, the tropicalization of $X$ with respect to the $p$-adic valuation is disconnected, consisting of the two points $0$ and $1$.
\end{ex}

\begin{ex}\label{ex:connected-codim-2}
Let $K$ be the field of rational functions $\CC(t)$ with the $t$-adic valuation.  Consider the linear space $X$ in $\GG_m^4$ cut out by the linear functions
\[
f_1 = x_1 + x_2 + x_3 + x_4 \mbox{ \ \ and \ \ } f_2 = 1 + x_1 + 2x_2 + 3x_3 + 4x_4.
\]
Its tropicalization $\Delta = \Trop(X)$ is a ``generic tropical plane'' in $\RR^4$, which is the union of the cones spanned by any two of the vectors $e_1, e_2, e_3, e_4, (-e_1 - \cdots - e_4)$.

Now let $Y$ be the variety in $\GG_m^5$ defined by $f_1$, $f_2$, and the irreducible quadratic polynomial $g = x_5^2 + x_5 + t$. Since $Y$ is the product of a geometrically irreducible variety with an irreducible variety, it is irreducible.  Note that $g$ has two distinct roots over $\CC\{\!\{t\}\!\}$ with valuation 0 and 1, and thus, the tropicalization of $Y$ is the disjoint union
\[
\Trop(Y) = \Delta \cup \left( \Delta + e_5 \right),
\]
where we identify $\Delta$ with its image in the hyperplane $\RR^4 \times \{0\}$ in $\RR^5$..

Let $Z$ be the image of $Y$ under the monomial map from $\GG_m^5$ to $\GG_m^4$ that takes $(x_1, x_2, x_3, x_4, x_5)$ to $(x_1x_5, x_2x_5^2, x_3x_5^3, x_4x_5^4)$. Then $Z$ is irreducible and $\Trop(Z)$ is the image of $\Trop(Y)$ under the corresponding linear map from $\RR^5$ to $\RR^4$, which is the union
\[
\Trop(Z) = \Delta \cup \left( \Delta + (1,2,3,4) \right).
\]
Each of $\Delta$ and $\Delta + (1,2,3,4)$ is connected through codimension 1, but their intersection is a single point, namely $(0,0,1,2)$.  Hence $\Trop(Z)$ is connected, but not connected through codimension 1.
\end{ex}

\subsection*{Acknowledgments.}
We thank Bernd Sturmfels and Walter Gubler for suggesting the problem of connectivity of tropicalizations in positive and mixed characteristic, and for helpful comments on an earlier draft of this work.  DC is supported by the National Science Foundation under award DMS-1103856, and SP is partially supported by DMS-1068689.  We are grateful to the Max Planck Institute in Bonn, where this work was done, for its warm hospitality and ideal working conditions.

\section{Proof of connectivity}

The main part of the proof is structured as an induction on dimension, similar to that in \cite{BJSST}.  The base of the induction consists of showing that the tropicalization of an irreducible curve is connected.  The induction is done by intersecting with the tropicalization of a hyperplane in suitable coordinates, using the Bertini irreducibility theorem, and applying the lifting theorem for proper intersections of tropicalizations.  One important difference from the argument in \cite{BJSST} is that we intersect with tropicalizations of hyperplanes in the inductive step, rather than with affine hyperplanes in $\RR^n$. As explained in Remark~\ref{rem:error}, intersections with affine hyperplanes may be disconnected, even when the intersection is proper.

We use the following proposition to reduce Theorem~\ref{thm:main} to the case where $K$ is algebraically closed.

\begin{prop} \label{prop:component}
Suppose $X$ is irreducible and $K$ is either complete or real closed with convex valuation ring. Let $\overline K$ be an algebraic closure of $K$ equipped with an extension of the given valuation on $K$.  Then the tropicalization of any irreducible component of $X_{\overline K}$ is equal to $\Trop(X)$.
\end{prop}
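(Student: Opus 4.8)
The plan is to reduce the statement to two standard facts together with one computation. The two facts are that tropicalization does not change under extension of the valued ground field (as recalled in the introduction), and that, because $X$ is irreducible over $K$, the group $\Gal(\overline K/K)$ permutes the finitely many irreducible components $X_1,\dots,X_r$ of $X_{\overline K}$ transitively. Granting the first, $\Trop(X)=\Trop(X_{\overline K})$, and since the analytification of $X_{\overline K}$ is the union of the analytifications of $X_1,\dots,X_r$, we get $\Trop(X)=\Trop(X_1)\cup\cdots\cup\Trop(X_r)$. So it is enough to show that the $\Trop(X_i)$ all coincide.

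The key observation is that a valuation-preserving $K$-automorphism of $\overline K$ induces an isomorphism of tropicalizations. Suppose $\sigma\in\Gal(\overline K/K)$ preserves the chosen valuation on $\overline K$. Then $\sigma$ acts on the torus $\GG_m^n$ over $\overline K$ fixing the coordinate functions $x_1,\dots,x_n$, which are defined over $K$, and carries the subvariety $X_i$ to $\sigma X_i$; since $\sigma$ preserves the valuation, the induced automorphism of analytifications commutes with the coordinate-wise valuation map to $\RR^n$, and hence $\Trop(\sigma X_i)=\Trop(X_i)$. Combined with the transitivity of the Galois action (for any $i,j$ choose $\sigma$ with $\sigma X_i=X_j$), this shows $\Trop(X_1)=\cdots=\Trop(X_r)$, and each therefore equals their union $\Trop(X)$. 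Everything thus comes down to verifying that $\Gal(\overline K/K)$ preserves the valuation on $\overline K$, and this is the step where the hypotheses on $K$ are used.

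If $K$ is complete, the valuation extends uniquely to every algebraic extension of $K$, so in particular to $\overline K$; as $\val\circ\sigma$ is again an extension of $\val|_K$, uniqueness forces $\val\circ\sigma=\val$. If instead $K$ is real closed with convex valuation ring $R$, then $\overline K=K(\sqrt{-1})$ has degree $2$ over $K$ and $\Gal(\overline K/K)$ is generated by the nontrivial involution $\sigma$; here I would take the extension of $\val$ to $\overline K$ to be given by the norm formula $z\mapsto\frac{1}{2}\val(z\,\sigma(z))$, which is $\sigma$-invariant by construction. The only point that needs the hypothesis is that this formula is in fact a valuation on $\overline K$; writing $z=a+b\sqrt{-1}$ so that $z\,\sigma(z)=a^2+b^2$, the ultrametric inequality reduces to the implication that $a^2+b^2\in R$ forces $a,b\in R$, which follows from convexity of $R$ together with the inequalities $0\le a^2\le a^2+b^2$ and $0\le b^2\le a^2+b^2$ holding in the ordered field $K$. (In particular this is the unique extension, so it agrees with the one fixed in the statement.) I expect the main obstacle to be precisely this verification in the real closed case; the complete case is classical, and the remaining steps — extension-invariance of $\Trop$ and transitivity of the Galois action on the components of $X_{\overline K}$ — are standard.
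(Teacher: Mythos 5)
Your overall strategy is the same as the paper's: use invariance of tropicalization under extension of the valued ground field, transitivity of the $\Gal(\overline K | K)$-action on the irreducible components of $X_{\overline K}$, and the fact that a valuation-preserving Galois automorphism fixes the coordinate functions and hence commutes with the coordinatewise valuation map, so that everything reduces to Galois-invariance of the valuation on $\overline K$. In the complete case your argument matches the paper's, with the small caveat that the classical unique-extension theorem is usually stated for nontrivial valuations; the trivially valued case needs a separate one-line observation (the paper uses a Newton polygon remark), though your assertion remains true.

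In the real closed case there is a gap at the final parenthetical. The proposition fixes an \emph{arbitrary} extension of the valuation to $\overline K = K(\sqrt{-1})$, so what you must show is that \emph{that} extension is $\sigma$-invariant. You construct one $\sigma$-invariant extension $w(z)=\tfrac12\val\bigl(z\,\sigma(z)\bigr)$, and your convexity verification that this is a valuation is correct; but the claim ``in particular this is the unique extension'' does not follow from the construction alone: exhibiting a Galois-invariant extension does not by itself exclude other extensions, and without uniqueness you cannot identify $w$ with the extension fixed in the statement. The gap is filled by a standard input: for instance, the conjugation theorem (all extensions of a valuation to a normal algebraic extension are conjugate under the Galois group, so a Galois-fixed extension is the only one); or the bound that a degree-two extension carries at most two extensions of the valuation, together with the observation that an involution on a set of at most two extensions that fixes one must fix the other; or the paper's direct route, in which for $\alpha \in \overline K \setminus K$ with minimal polynomial $u^2 + bu + c$ the negativity of the discriminant plus convexity give $\val(b^2) \geq \val(c)$, so the Newton polygon has a single lower face and \emph{every} extension must assign $\val(\alpha) = \val(c)/2$, which is uniqueness outright. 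With any of these added your proof is complete and is essentially the paper's argument, differing only in that you build the invariant extension by the norm formula where the paper proves uniqueness of the extension directly.
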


\begin{proof}
First, we claim that if $K$ is complete or real closed with convex valuation ring then the valuation on $K$ extends uniquely to $\overline K$, and hence is invariant under the action of the Galois group $\Gal(\overline K | K)$.  In the case where $K$ is complete and the valuation is nontrivial, this is Proposition~XII.2.5 in~\cite{Lang02}.  Suppose the valuation on $K$ is trivial.
Then the Newton polygon of any polynomial with coefficients in $K$ has only one lower face, with slope zero.   It follows that the only extension of the valuation on $K$ to an algebraic extension field is the trivial one.

Now suppose that $K$ is real closed with convex valuation ring, so its algebraic closure is $\overline K = K(\sqrt{-1})$. Let $\alpha$ be an element of $\overline K \setminus K$, and let $f = u^2 + bu + c$ be its minimal polynomial. Since $f$ is irreducible over $K$, its discriminant $b^2 - 4c$ must be negative. Then, because the valuation ring of $K$ is convex and $b^2$ is positive, we must have
\[
\val(b^2) \geq \val(c).
\]
It follows that the Newton polygon of $f$ has a single lower face, with slope $\val(c)/2$, and hence the valuation of $\alpha$ must be $\val(c)/2$.  In particular, the extension of the valuation to~$\overline K$ is unique, as claimed.

Now, consider the tropicalizations of the irreducible components of $X_{\overline K}$.  Since $X$ is irreducible, the Galois group $\Gal(\overline K | K)$ acts transitively on the irreducible components and, as shown above, the Galois group commutes with the valuation on $\overline K$.  Therefore, for any two irreducible components of $X_{\overline K}$, there is an element of the Galois group that induces a bijection between their analytifications.  Moreover, since the coordinate functions on $\GG_m^n$ are defined over $K$, these bijections commute with the coordinatewise valuation map.  Hence, any two irreducible components have the same tropicalization, which must be equal to $\Trop(X_{\overline K})$.
\end{proof}

For the induction step of our proof of Theorem~\ref{thm:main} we will use a change of coordinates on $\ZZ^n$ followed by an intersection with the tropicalization of a hyperplane.  Let $a_1, \ldots, a_n$
be elements of $K^*$, let $v$ be the vector $(\val(a_1), \ldots, \val(a_n))$, and let $\Delta$
be the $(n-1)$-dimensional fan in $\RR^n$ whose maximal cones are spanned by any
$n-1$ of the vectors $e_1, \ldots, e_n, (-e_1 - \ldots -e_n)$.  Then the
tropicalization of the hyperplane $H$ cut out by $1+ a_1 x_1 + \cdots + a_n x_n$
is the translation
\[
\Trop(H) = \Delta - v.
\]
Each facet is normal to one of the vectors $e_1, \ldots, e_n$, or $e_j - e_i$ for $1 \leq i < j \leq n$.  We will use the following lemma to choose coordinates so that translates of $\Delta$ intersect $\Trop(X)$ as transversely as possible.  

\begin{lem} \label{lem:coords}
There is a basis $\{f_1, \ldots, f_n\}$ for $\ZZ^n$ such that none of the vectors $f_1, \ldots, f_n$ or $f_j - f_i$ for $1 \leq i < j \leq n$ are perpendicular to a positive dimensional face of $\Trop(X)$.  
\end{lem}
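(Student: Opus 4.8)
The plan is to recast the lemma as a genericity statement about integer matrices and then invoke the Zariski density of $\mathrm{SL}_n(\ZZ)$ in $\mathrm{SL}_n(\RR)$. First I would observe that $\Trop(X)$ is the support of a finite polyhedral complex, so the linear spans of its positive-dimensional faces form a finite collection $\mathcal{L}$ of subspaces of $\RR^n$, each of dimension at least $1$; set $B = \bigcup_{L \in \mathcal{L}} L^\perp$, a finite union of proper linear subspaces of $\RR^n$. A vector is perpendicular to some positive-dimensional face of $\Trop(X)$ precisely when it lies in $B$. Now if $A \in \mathrm{GL}_n(\ZZ)$ has columns $f_1, \ldots, f_n$, then $A e_i = f_i$ and $A(e_j - e_i) = f_j - f_i$, so the lemma amounts to finding $A \in \mathrm{GL}_n(\ZZ)$ — indeed, $A \in \mathrm{SL}_n(\ZZ)$ will do — such that $Aw \notin B$ for every $w$ in the finite set $W = \{e_1, \ldots, e_n\} \cup \{e_j - e_i : 1 \le i < j \le n\}$ of nonzero vectors.

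Second, I would control the ``bad'' locus inside the group. When $n = 1$ the lemma is trivial, since then $\Trop(X)$ is a point or all of $\RR$ and $B = \{0\}$; so assume $n \ge 2$. Fix a nonzero $w \in \RR^n$ and a proper linear subspace $V \subseteq \RR^n$. Expressing $V$ as an intersection of kernels of linear forms, the condition $Aw \in V$ becomes a system of linear equations in the entries of $A$, so $\{A \in \mathrm{SL}_n : Aw \in V\}$ is Zariski closed in $\mathrm{SL}_n$; and it is a \emph{proper} subset, because $\mathrm{SL}_n(\RR)$ acts transitively on $\RR^n \setminus \{0\}$ and hence some $A$ sends $w$ outside $V$. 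Since $\mathrm{SL}_n$ is irreducible, the union of these sets over all $w \in W$ and all $V = L^\perp$ with $L \in \mathcal{L}$ is still a proper Zariski-closed subset of $\mathrm{SL}_n$, so its complement $U$ is a nonempty Zariski-open subset.

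Third, I would finish using density: the Zariski closure of $\mathrm{SL}_n(\ZZ)$ is a closed subgroup of $\mathrm{SL}_n$ that contains, for each $i \ne j$, the closure of $\{I + t E_{ij} : t \in \ZZ\}$, namely the full one-parameter unipotent subgroup $\{I + t E_{ij} : t \in \RR\}$; as these subgroups generate $\mathrm{SL}_n$, the closure is all of $\mathrm{SL}_n$, so $\mathrm{SL}_n(\ZZ)$ meets the nonempty open set $U$. The columns of any $A \in U \cap \mathrm{SL}_n(\ZZ)$ then form a basis of $\ZZ^n$ with the required property. The only step that needs genuine care is the properness of each set $\{A : Aw \in V\}$ inside $\mathrm{SL}_n$ — this is exactly where the transitivity of $\mathrm{SL}_n$ on nonzero vectors, and hence the hypothesis $n \ge 2$, is used, the case $n = 1$ having been handled separately; beyond this bookkeeping I do not expect any serious obstacle.
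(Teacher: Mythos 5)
Your argument is correct, but it takes a genuinely different route from the paper's. The paper proves the lemma by a direct, elementary construction: first pick a primitive vector $f_1$ avoiding the finitely many proper subspaces $L^\perp$, where $L$ runs over the direction spaces of the positive dimensional faces; then extend $f_1$ to a basis $\{f_1, g_2, \ldots, g_n\}$ of $\ZZ^n$ and replace each $g_j$ by $f_j = g_j + a_j f_1$, using the observation that, because $f_1 \notin L^\perp$, for each fixed $g$ only finitely many integers $a$ put $g + a f_1$ into some $L^\perp$; choosing the integers $a_j$ inductively handles the conditions on $f_j$ and on $f_j - f_i$ simultaneously, and the resulting $\{f_1,\ldots,f_n\}$ is still a $\ZZ$-basis. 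You instead package the statement as finding a matrix of $\mathrm{SL}_n(\ZZ)$ outside a proper Zariski-closed locus, and appeal to irreducibility of $\mathrm{SL}_n$ together with Zariski density of $\mathrm{SL}_n(\ZZ)$ (via the closures of the elementary unipotent subgroups, which generate $\mathrm{SL}_n$). The ingredients you use --- properness of each condition via transitivity of $\mathrm{SL}_n(\RR)$ on $\RR^n \setminus \{0\}$ for $n \geq 2$, the Zariski closure of a subgroup being a subgroup, generation by elementary matrices --- are standard and correctly deployed, and your formulation even gives the slightly stronger conclusion that a ``generic'' unimodular change of basis works; the trade-off is importing algebraic-group machinery where the paper's two-step choice needs nothing beyond avoiding finitely many proper subspaces and finitely many integers. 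One small wording fix: the subspaces in your collection $\mathcal{L}$ should be the direction spaces of the positive dimensional faces $F$, that is, the spans of $F - F$ (the affine hulls translated to the origin), not the linear spans of the faces themselves; with that reading your characterization ``perpendicular to some positive dimensional face if and only if the vector lies in $B$'' is exactly right, each $L^\perp$ is still a proper subspace, and the rest of your argument goes through unchanged.
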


\begin{proof}
To construct such a basis, first choose a primitive vector $f_1$ in $\ZZ^n$ that
is not perpendicular to any positive dimensional face of $\Trop(X)$.  Then,
choose $g_2, \ldots, g_n$ such that $\{f_1, g_2, \ldots, g_n\}$ is a basis for
$\ZZ^n$.  By the choice of $f_1$, for any vector $g \in \ZZ^n$ there are only
finitely many integers $a$ such that $g + af_1$ is perpendicular to a positive
dimensional face of $\Trop(X)$.  Therefore, we can choose an integer $a_2$ such
that $f_2 = g_2 + a_2 f_1$ and $f_2 - f_1$ are not perpendicular to any positive
dimensional face of $\Trop(X)$.  Iterating this process, for $3 \leq j \leq n$,
we choose an integer~$a_j$ such that $f_j = g_j + a_j f_1$ and $f_j - f_i$
for $1 \leq i < j$ are not perpendicular to any positive dimensional face of
$\Trop(X)$.   After choosing $f_n$, we arrive at a basis $\{f_1, \ldots, f_n\}$
with the required property.
\end{proof}

\begin{proof}[Proof of Theorem~\ref{thm:main}]
First, we reduce to the case where $K$ is algebraically closed and complete with respect to a nontrivial valuation.  If $K$ is complete or real closed with a convex valuation ring, 
then, by Proposition~\ref{prop:component}, we can extend scalars to the algebraic closure $\overline K$ and replace $X$ by any irreducible component of $X_{\overline K}$, without changing the tropicalization.

We now assume that $K$ is algebraically closed.  Let $L$ be the completion of
the algebraic closure of an extension of $K$ with nontrivial valuation.  Then
$L$ is algebraically closed \cite[Proposition~3.4.1.3]{BGR84},  $X_L$ is
irreducible \cite[Theorem~4.4.4]{EGA4.2}, and  $\Trop(X_L)$ is equal to
$\Trop(X)$ \cite[Proposition~6.1]{analytification}.  Therefore, after replacing
$X$ by $X_L$, we can assume that $K$ is algebraically closed and complete
with respect to a nontrivial valuation.  Under these assumptions, the analytification of $X$ is connected
\cite[Theorem~3.4.5]{Berkovich90}.  Since
$\Trop(X)$ is the image of the analytification under a continuous map, it follows that $\Trop(X)$ is connected.  This proves the theorem if $X$ has dimension
at most 1.  We now proceed by induction on the dimension of $X$.

Suppose that $X$ has dimension at least 2.  Fix
a polyhedral decomposition of~$\Trop(X)$, and let $F$ and $F'$ be two facets in this decomposition.  We say that two facets in a pure-dimensional polyhedral complex are adjacent if they share a face of codimension 1.  To prove the theorem, we will produce a sequence of facets $F = F_0, \ldots,
F_s = F'$ in $\Trop(X)$ such that $F_{i-1}$ and $F_i$ are either equal or adjacent, for $1 \leq i \leq s$.

By Lemma~\ref{lem:coords}, after a change of coordinates
on $\GG_m^n$, we may assume that no positive dimensional face of $\Trop(X)$ is
contained in any translate of the generic tropical hyperplane $\Delta$.  Since
$K$
is algebraically closed with a nontrivial valuation, the value group is dense in
$\RR$.  Therefore, we can choose a vector $v$ with entries in the value group
such that the translate $\Delta - v$ meets $F$ and $F'$ in their relative interiors, and does not
contain any vertices of $\Trop(X)$. 

Now consider hyperplanes in $\GG_m^n$ whose tropicalization is~$\Delta-v$. Explicitly,
such hyperplanes can be given by the equation $1 + a_1 x_1 + \cdots + a_n x_n$,
where the coefficients~$a_i$ are non-zero elements of $K$ with valuations
$\val(a_i) = v_i$. The set of such coefficients is dense in $\GG_m^n$. By
\cite[Theorem~6.3.4]{Jouanolou83}, there is a dense open set of coefficients
such that the intersection of the corresponding hyperplane with~$X$ is
irreducible. Combining these two statements, we can choose $H$ such that
$\Trop(H) = \Delta - v$ and $H \cap X$ is irreducible.

By the choice of coordinates, $\Trop(H)$ meets each face of $\Trop(X)$ properly.
Thus, it follows
from
\cite[Theorem~1.1]{tropicallifting} that
\[
\Trop(X \cap H) = \Trop(X) \cap \Trop(H).
\]
Furthermore, if we give $\Trop(X) \cap \Trop(H)$ the polyhedral structure coming
from the intersection, then each facet of
 $\Trop(X) \cap \Trop(H)$ is contained in a unique facet of $\Trop(X)$.  Furthermore, if $G$ and $G'$ are adjacent facets of $\Trop(X) \cap \Trop(H)$, then the facets of $\Trop(X)$ that contain them are either adjacent or equal.   Now, let $G$ and $G'$ be facets of $\Trop(X \cap H)$ that are contained in $F$ and
$F'$, respectively. Since $X \cap H$ is irreducible, the induction hypothesis
says that $\Trop(X \cap H)$ is connected through codimension 1, so there is a
sequence of facets $G = G_0, \ldots, G_s = G'$ in $\Trop(X \cap H)$ such that
$G_{i-1}$ is adjacent to $G_i$ for $1 \leq i \leq s$.  Let
$F_i$ be the unique facet of $\Trop(X)$ that contains $G_i$.  Then $F_{i-1}$ and $F_i$ are either equal or adjacent, for $1 \leq i \leq s$, and the theorem follows.
\end{proof}

\begin{rem}
We used the connectedness of analytifications of irreducible curves over complete fields for the base case of the induction in the proof of Theorem~\ref{thm:main}.
It is also possible to give a purely algebraic proof, which we outline here.

Suppose that $X$ is a curve in $\GG_m^n$ over an algebraically closed field $K$ that is complete with respect to a nontrivial valuation.  Choose a tropical compactification $\mathcal X$ of $X$ in a toric scheme over the valuation ring, as in \cite[Section~12]{Gubler12}.  Like any one-dimensional polyhedral complex, $\Trop(X)$ deformation retracts onto the union of its bounded faces, and the union of its bounded faces is the image of the dual graph of the special fiber of $\mathcal X$ under a continuous map.  Therefore, it is enough to show that the special fiber of this tropical compactification is connected.  If this model is defined over the valuation ring in a discretely valued subfield, then the tropical compactification is noetherian, irreducible, and proper over the DVR, and hence its special fiber is connected, by Zariski's connectedness theorem \cite[Section~4.3]{EGA3.1}.  If it is not defined over a DVR, the tropical compactification is still flat and finite type over the valuation ring, by \cite[Proposition~6.7]{Gubler12} and hence finitely presented \cite[Corollary~3.4.7]{RaynaudGruson71}.  Connectedness of the special fiber then follows from Zariski's connectedness theorem after noetherian approximation.
\end{rem}

\begin{rem} \label{rem:error}
The proof of \cite[Theorem~14]{BJSST} is similarly structured as an induction on
the dimension of $X$, but contains errors in the base case and in the
induction step, as we now explain. The base case involves passing from a
surface over $\CC$ to a curve over
the field of Puiseux series $\CC\{\!\{t\}\!\}$, via an extension of scalars, before using the connectedness of analytifications of curves over $\CC\{\!\{t\}\!\}$.  However, this extension of scalars does not preserve irreducibility in general.  Specifically, in the fifth line of the proof, $I$ denotes the ideal defining $X$
in the
coordinate ring of $\GG_m^n$ over $\CC$ and $I'$ is the extension of $I'$ into
the coordinate ring of $\GG_m^{n-1}$ over
$\CC\{\!\{t\}\!\}$ induced by setting $x_n$ equal to $t$. It is claimed that
$I'$ is prime, but this is false in general, as the following example illustrates.

Suppose $I$ is the principal ideal generated by $x_1^2 x_n + x_1 + x_n$. Then
$I$ is prime, but $x_1^2 t + x_1 + t$ is a product of two distinct linear
factors over the field of Puiseux series, and hence neither $I'$ nor its radical
is prime.  This error is critical, since the irreducibility of the scheme $X'$
cut out by $I'$ is used to conclude that $\Trop(X')$ is connected.  In this
counterexample, not only is $X'$ reducible, but $\Trop(X')$ is disconnected.

There is also an error in the induction step; the intersection that is used to reduce dimension does not preserve irreducibility in general. Specifically, in the fourth line from the
bottom
of the proof, it is claimed that the intersection of~$X$ with a generic
translate of a subtorus of codimension 1 is irreducible.  This claim is false in
general, and the hypersurface defined by $x_1^2 x_n + x_1 + x_n$ is again a
counterexample.  The intersection of this hypersurface with a generic translate of the
codimension 1 subtorus $T$ cut out by $x_n = 1$ is reducible.  The irreduciblity
is claimed to follow from the Kleiman-Bertini theorem
\cite[Theorem~III.10.8]{Hartshorne77}, which would say that the intersection
with a general translate by the torus is smooth, provided that $X$ is also
smooth.  In the counterexample above, the hypersurface is smooth, and its
intersection with a generic translate of $T$ is smooth but disconnected.
\end{rem}

The examples in Remark~\ref{rem:error} involve a subvariety that is preserved by a positive dimensional subtorus.  Such subvarieties have many exceptional properties related to tropical geometry.  See, for instance, \cite[Theorem~1.1]{adelicamoebas} and \cite[Theorem~3.1]{HackingKeelTevelev09}.  It should be interesting to investigate under what conditions generic translates of subtori have irreducible intersection with a given subvariety.

\begin{question}
Suppose $X$ is an irreducible subvariety of $\GG_m^n$ over an algebraically closed field.  For which subtori of $\GG_m^n$ is the intersection of a generic translate with $X$ irreducible? 
\end{question}

\noindent The existence of such a subtorus with dimension~$(n-1)$ would imply that the intersection of $\Trop(X)$ with a generic translate of the corresponding hyperplane is proper and connected through codimension~1. 

\begin{question}
Does there always exist an affine hyperplane in $\RR^n$ whose intersection with $\Trop(X)$ is proper and connected through codimension~1, or even just connected?
\end{question}

\end{document}